\newtheorem{theorem}{Theorem}[section]
\newtheorem{lemma}[theorem]{Lemma}
\theoremstyle{definition}
\title{\textbf{character graphs with diameter three}}
\author{Mahdi Ebrahimi\footnote{ m.ebrahimi.math@ipm.ir}
 \\
 {\small\em  School of Mathematics, Institute for Research in Fundamental Sciences (IPM)},\\{\small\em P.O. Box: 19395--5746, Tehran, Iran}}
\date{}
\begin{document}

\maketitle

\begin{abstract}
For a finite group $G$, let $\Delta(G)$ denote the character graph built on the set of degrees of the irreducible complex characters of $G$. In this paper, we show that if the diameter of $\Delta(G)$ is equal to three, then the complement of $\Delta(G)$ is bipartite. Also in this case, we determine the structure of the character graph $\Delta(G)$.

 \end{abstract}
\noindent {\bf{Keywords:}}  Character graph, Character degree, Diameter. \\
\noindent {\bf AMS Subject Classification Number:}  20C15, 05C12, 05C25.

\section{Introduction}
$\noindent$ Let $G$ be a finite group and $R(G)$ be the solvable radical of $G$. Also let ${\rm cd}(G)$ be the set of all character degrees of $G$, that is,
 ${\rm cd}(G)=\{\chi(1)|\;\chi \in {\rm Irr}(G)\} $, where ${\rm Irr}(G)$ is the set of all complex irreducible characters of $G$. The set of prime divisors of character degrees of $G$ is denoted by $\rho(G)$. It is well known that the
 character degree set ${\rm cd}(G)$ may be used to provide information on the structure of the group $G$. For example, Ito-Michler's Theorem \cite{[C]} states that if a prime $p$ divides no character degree of a finite group $G$, then $G$ has a normal abelian Sylow $p$-subgroup. Another result due to Berkovich \cite{[D]} says that if a prime $p$ divides
 every non-linear character degree of a group $G$, then $G$ is solvable.
  Also in the late 1990s, Bertram Huppert conjectured that the non-abelian simple groups are essentially determined by the set of their character degrees. He verified the conjecture on a case-by-case basis for many non-abelian simple groups, including the Suzuki groups, many of the sporadic simple groups, and a few of the simple groups of Lie type \cite{con}.

A useful way to study the character degree set of a finite group $G$ is to associate a graph to ${\rm cd}(G)$.
One of these graphs is the character graph $\Delta(G)$ of $G$ \cite{[I]}. Its vertex set is $\rho(G)$ and two vertices $p$ and $q$ are joined by an edge if the product $pq$ divides some character degree of $G$.
 When $G$ is a finite group, some interesting results on the character graph of $G$ have been obtained. For instance, in \cite{1}, it was shown that if $G$ is solvable, then the complement of $\Delta(G)$ is bipartite. Sayanjali et.al \cite{reg} proved that the character graph $\Delta(G)$ of a finite group $G$ is a connected regular graph of odd order, if and only if $\Delta(G)$ is complete.
 Also the character graph $\Delta(G)$ of a solvable group $G$ is Hamiltonian if and only if $\Delta(G)$ is a
block with at least 3 vertices \cite{Ha}. We refer the readers to a survey by Lewis \cite{[M]} for results concerning this graph and related topics.

 The diameter of the character graph $\Delta(G)$ of a finite group $G$ is at most 3 (see \cite{[DM]} and \cite{[P]}). When for a finite solvable group $G$, $\Delta(G)$ has exactly diameter 3, Lewis in \cite{[A]} shows that
we may partition $ \rho(G) $ as $ \rho_{1} \cup \rho_{2} \cup
\rho_{3} \cup \rho_{4} $ where no prime in $ \rho_{1} $ is
adjacent to any prime in $ \rho_{3} \cup \rho_{4} $ and no prime
in $ \rho_{4} $ is adjacent to any prime in $ \rho_{1}  \cup
\rho_{2 } $, every prime in $  \rho_{2 } $ is adjacent to some
primes in $ \rho_{3} $ and vice-versa, and $ \rho_{1}  \cup
\rho_{2 } $ and $ \rho_{3} \cup \rho_{4} $ both determine
complete subgraphs of $ \Delta(G)$. If for a finite group $G$, $\rho(G)$ can be partitioned as above, then we say that $\Delta(G)$ is a duke graph. Now we are ready to state our main result. \\

\noindent \textbf{Main Theorem.}  \textit{Let $G$ be a finite group. If the diameter of $\Delta(G)$ is three, then  $\Delta(G)$ is a duke graph and its complement  is bipartite.}

\section{Preliminaries}
$\noindent$ In this paper, all groups are assumed to be finite and all
graphs are simple and finite. For a finite group $G$, the set of prime divisors of $|G|$ is denoted by $\pi(G)$. Also note that for an integer $n\geqslant 1$,  the set of prime divisors of $n$ is denoted by $\pi(n)$.
If $H\leqslant G$ and $\theta \in \rm{Irr}(H)$, we denote by $\rm{Irr}(G|\theta)$ the set of irreducible characters of $G$ lying over $\theta$ and define $\rm{cd}(G|\theta):=\{\chi(1)|\,\chi \in \rm{Irr}(G|\theta)\}$. We frequently use,  Gallagher's Theorem which is corollary 6.17 of \cite{[isa]}.  We begin with Corollary 11.29 of \cite{[isa]}.

\begin{lemma}\label{fraction}
Let $ N \lhd G$ and $\varphi \in \rm{Irr}(N)$. Then for every $\chi \in \rm{Irr}(G|\varphi)$, $\chi(1)/\varphi(1)$ divides $[G:N]$.
\end{lemma}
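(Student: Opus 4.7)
The plan is to prove the divisibility $\chi(1)/\varphi(1)\mid [G:N]$ by reducing, via Clifford theory, to the case where $\varphi$ is $G$-invariant, and then invoking the theory of projective representations attached to the character triple $(G,N,\varphi)$.

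First I would let $T:=I_G(\varphi)$ be the inertia subgroup of $\varphi$ in $G$. By the Clifford correspondence (Theorem 6.11 of \cite{[isa]}), induction from $T$ to $G$ is a bijection $\mathrm{Irr}(T|\varphi)\to\mathrm{Irr}(G|\varphi)$, so there exists a unique $\psi\in\mathrm{Irr}(T|\varphi)$ with $\psi^G=\chi$. This gives
\[
\chi(1)=[G:T]\,\psi(1).
\]
Since $\varphi$ is $T$-invariant, Clifford's theorem applied to $(T,N,\varphi)$ yields $\psi_N=e\varphi$ for some positive integer $e=\psi(1)/\varphi(1)$. Thus
\[
\frac{\chi(1)}{\varphi(1)}=[G:T]\cdot e,
\]
and the problem reduces to showing $e\mid [T:N]$, for then $\chi(1)/\varphi(1)$ divides $[G:T]\cdot[T:N]=[G:N]$.

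To establish $e\mid[T:N]$, I would invoke the projective-representation setup of Chapter~11 of \cite{[isa]}. Since $\varphi$ is $T$-invariant, there exists a projective representation $\mathcal{P}$ of $T$ that extends $\varphi$, with factor set $\alpha$ inflated from some $\alpha\in Z^{2}(T/N,\mathbb{C}^{\times})$. The correspondence between characters in $\mathrm{Irr}(T|\varphi)$ and irreducible $\alpha^{-1}$-projective representations of $T/N$ (Theorem 11.2 and its corollaries in \cite{[isa]}) matches $\psi$ with an irreducible projective representation of $T/N$ of degree exactly $e$. Because the degrees of irreducible projective representations of a finite group divide the group's order (the analogue of the classical divisibility theorem for the twisted group algebra), we obtain $e\mid|T/N|=[T:N]$, completing the argument.

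The main obstacle is this last divisibility step: one must either develop the character-triple machinery and cite the projective-degree divisibility theorem, or else argue directly that the integers $\psi(1)/\varphi(1)$ arising from $\psi\in\mathrm{Irr}(T|\varphi)$ divide $[T:N]$ (for instance by reduction to the case where $N$ is central via the standard isomorphism of character triples, where the claim becomes the ordinary divisibility of $\chi(1)\mid|G/Z|$ in the quotient). All other steps—Clifford correspondence, the formula $\chi(1)=[G:T]\psi(1)$, and the ramification index $e$—are routine.
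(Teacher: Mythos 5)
Your proposal is correct. Note that the paper offers no proof of this lemma at all: it is quoted verbatim as Corollary 11.29 of Isaacs' book, so there is no in-paper argument to compare against. What you have written is essentially the standard proof of that corollary. The reduction via the Clifford correspondence to the inertia group $T=I_G(\varphi)$, the identity $\chi(1)=[G:T]\psi(1)$, and the ramification $\psi_N=e\varphi$ are all exactly right, and they correctly reduce the problem to showing $e\mid[T:N]$.

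The only step that carries real mathematical weight is the final divisibility $e\mid[T:N]$, and you have identified it as such. Your primary route (the degree of an irreducible projective representation of $T/N$ divides $|T/N|$) is a true theorem of Schur, provable by lifting the projective representation to an ordinary one on a Schur cover $\widehat{H}$ of $H=T/N$ and applying $\widehat{\chi}(1)\mid[\widehat{H}:Z(\widehat{H})]\mid |H|$; but it is not stated in this form in Isaacs, so as written you would be citing a result slightly outside the declared reference. Your fallback route --- replacing the character triple $(T,N,\varphi)$ by an isomorphic triple $(T^*,N^*,\varphi^*)$ with $N^*$ central and $\varphi^*$ linear and faithful, where the claim becomes $\chi^*(1)\mid[T^*:N^*]$ via $\chi^*(1)\mid[T^*:Z(T^*)]$ --- is precisely how Isaacs proves Theorem 11.28 and deduces Corollary 11.29, so that version of your argument coincides with the proof the paper implicitly relies on. Either way the proof is complete and correct; I would only ask you to commit to one of the two justifications of $e\mid[T:N]$ and supply the precise reference or argument for it.
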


  \begin{lemma}\label{lw}\cite{[non]}
 Let $p$ be a prime, $f\geqslant 2$ be an integer, $q=p^f\geqslant 5$ and $S\cong \rm{PSL}_2(q)$. If $q\neq9$ and $S\leqslant G\leqslant \rm{Aut}(S)$, then $G$ has irreducible characters of degrees $(q+1)[G:G\cap \rm{PGL}_2(q)]$ and $(q-1)[G:G\cap \rm{PGL}_2(q)]$.
 \end{lemma}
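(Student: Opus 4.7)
The plan is to apply Clifford's theorem to the extension $H \lhd G$, where $H := G \cap \mathrm{PGL}_2(q)$. Since $\mathrm{Aut}(\mathrm{PSL}_2(q))/\mathrm{PGL}_2(q)$ is the cyclic group of field automorphisms of order $f$ (and $\mathrm{PGL}_2(q) = \mathrm{PSL}_2(q)$ in characteristic $2$), the quotient $K := G/H$ is cyclic of order $e := [G:H]$ and is generated by the class of some field automorphism $\sigma$. If I can produce characters $\psi_+, \psi_- \in \mathrm{Irr}(H)$ of degrees $q+1$ and $q-1$ respectively whose inertia subgroups in $G$ coincide with $H$, then the induced characters $\mathrm{Ind}_H^G \psi_{\pm}$ are irreducible of the required degrees $(q+1)e$ and $(q-1)e$.

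To find the candidates I would use the standard parameterisation of the non-exceptional characters of $\mathrm{PGL}_2(q)$ and $\mathrm{PSL}_2(q)$: the principal series characters of degree $q+1$ are labelled by pairs $\{\alpha, \alpha^{-1}\}$, where $\alpha$ is a non-trivial linear character of the split torus $T \cong C_{q-1}$ (or $C_{(q-1)/\gcd(2,q-1)}$ in $\mathrm{PSL}_2(q)$) with $\alpha \neq \alpha^{-1}$; the discrete series characters of degree $q-1$ are labelled analogously by such pairs of characters of the non-split torus $T' \cong C_{q+1}$ (or $C_{(q+1)/\gcd(2,q-1)}$). A field automorphism $x \mapsto x^p$ acts on these labels by $\alpha \mapsto \alpha^p$, so the $K$-stabiliser of the character indexed by $\{\alpha, \alpha^{-1}\}$ consists of those $\sigma^i$ with $\alpha^{p^{if/e}} \in \{\alpha, \alpha^{-1}\}$.

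The core of the argument is then a counting step. For each non-identity element $\sigma^j \in K$, the number of $\alpha$ in a cyclic group of order $n$ satisfying $\alpha^{p^{jf/e}} \in \{\alpha, \alpha^{-1}\}$ is bounded by $\gcd(p^{jf/e}-1, n) + \gcd(p^{jf/e}+1, n)$. Summing the bounds over the maximal proper subgroups of $K$ yields an estimate of order $q^{1/r}$ with $r$ the smallest prime divisor of $e$, i.e.\ at most of order $\sqrt{q}$, whereas the total number of eligible $\alpha$ on each torus is of order $q$. Hence for $q$ large enough some $\alpha$ on each torus has trivial $K$-stabiliser, producing the desired $\psi_{+}$ and $\psi_{-}$.

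The main obstacle is the handling of the finite list of small cases left by the counting bound, namely $q \in \{8, 9, 16\}$ under the hypotheses $f \geq 2$ and $q \geq 5$. A direct inspection of the two tori in each case confirms existence of $\psi_{\pm}$ and explains the exclusion of $q = 9$: the split torus $\widehat{C_4}$ of $\mathrm{PSL}_2(9)$ carries a unique eligible character $\alpha$ (the character of order $4$) and it satisfies $\alpha^3 = \alpha^{-1}$, so the corresponding degree-$10$ principal series character is $\sigma$-invariant and this construction cannot produce a character of degree $10e$. On the non-split torus the counting succeeds even for $q = 9$, but the failure on the split side is enough to force its exclusion. In every remaining case, induction of $\psi_{+}$ and $\psi_{-}$ delivers the two claimed characters of $G$.
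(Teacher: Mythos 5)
This lemma is imported by the paper from Lewis--White \cite{[non]} without proof, so there is no internal argument to compare against; judged on its own, your outline is the standard Clifford-theoretic proof and is essentially sound. Setting $H=G\cap\mathrm{PGL}_2(q)$, noting $G/H$ embeds in the cyclic group of field automorphisms, and inducing a principal-series and a discrete-series character of $H$ whose $G/H$-stabiliser is trivial (found by counting fixed labels under $\alpha\mapsto\alpha^p$ on the two tori) is exactly how this is done in the literature. Three details need tightening. First, when $q$ is odd and $H=\mathrm{PSL}_2(q)$, a nontrivial coset of $H$ in $G$ may only be representable by a product of a diagonal and a field automorphism; you must therefore also check that the diagonal automorphism fixes every character of $\mathrm{PSL}_2(q)$ of degree $q\pm1$ (it does, as each is the restriction of a character of $\mathrm{PGL}_2(q)$), so that only the field part acts on the labels. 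Second, the relevant union is over the elements (equivalently, subgroups) of \emph{prime} order of $K$, i.e.\ the minimal subgroups, not the ``maximal proper subgroups''; the fixed-point bound $2q^{1/r}$ then applies to the element $\sigma^{f/r}$ for each prime $r\mid e$. Third, the exceptional list left by your crude estimate is larger than $\{8,9,16\}$: for instance, for $H=\mathrm{PSL}_2(p^2)$ with $p$ odd the split torus has only $(q-5)/4$ or $(q-3)/4$ eligible pairs against a bound of up to $p$ fixed pairs, which is inconclusive for $p=5$ as well, so $q=25$ (and a handful of other small prime powers) must also be inspected directly --- they all pass. Your diagnosis of the $q=9$ exclusion is correct and can be made fully rigorous by exhibiting $M_{10}$ (character degrees $1,9,10,16$), which has no irreducible character of degree $(q+1)[G:G\cap\mathrm{PGL}_2(9)]=20$.
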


\begin{lemma}\label{good}\cite{[Ton]}
Let $N$ be a normal subgroup of a group $G$ so that $G/N\cong S$, where $S$ is a non-abelian simple group. Let $\theta \in \rm{Irr}(N)$. Then either $\chi (1)/\theta(1)$ is divisible by two distinct primes in $\pi(G/N)$ for some $\chi \in \rm{Irr}(G|\theta)$ or $\theta$ is extendible to $\theta_0\in \rm{Irr}(G)$ and $G/N\cong A_5$ or $\rm{PSL}_2(8)$.
\end{lemma}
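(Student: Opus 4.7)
The plan is to reduce to the case where $\theta$ is $G$-invariant via Clifford correspondence, then use the character-triple isomorphism of Isaacs to translate the claim into a statement about irreducible (projective) character degrees of the simple group $S = G/N$, and finally verify that statement case-by-case using the classification of finite simple groups. Let $T = I_G(\theta)$ be the inertia group. Clifford's theorem gives a bijection $\mathrm{Irr}(T\,|\,\theta) \to \mathrm{Irr}(G\,|\,\theta)$ by induction, and for $\chi = \psi^G$ one has $\chi(1)/\theta(1) = [G:T]\cdot (\psi(1)/\theta(1))$, both factors being positive integers by Lemma~\ref{fraction}. In the non-invariant case $T \lneq G$, the quotient $T/N$ is a proper subgroup of $S$; using Guralnick's classification (a CFSG consequence) of subgroups of prime-power index in simple groups, one shows that either $[G:T]$ itself is divisible by two primes of $\pi(S)$, or the required second prime must appear in $\psi(1)/\theta(1)$ by a further Clifford-theoretic analysis inside $T$.

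The heart of the matter is the invariant case $T = G$. By Isaacs' character-triple isomorphism (Theorem~11.28 of \cite{[isa]}), the triple $(G,N,\theta)$ is equivalent to a triple $(\hat G, Z, \lambda)$ with $Z \leq Z(\hat G)$ cyclic, $\lambda \in \mathrm{Irr}(Z)$ faithful, $\hat G/Z \cong S$, and all degree ratios preserved. Thus $\chi(1)/\theta(1)$ ranges over the degrees of characters in $\mathrm{Irr}(\hat G\,|\,\lambda)$. If $\theta$ extends to $G$, then $\lambda$ extends to $\hat G$, and Gallagher's theorem identifies these degrees with $\mathrm{cd}(S)$ exactly. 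If $\theta$ does not extend, then $\hat G$ is a non-trivial central extension of $S$, namely a quotient of its Schur cover, and the relevant degrees are those of faithful irreducible characters of $\hat G$.

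The main obstacle is to verify the two classification-flavoured facts on which the reduction depends: (i) for every non-abelian simple group $S \not\cong A_5, \mathrm{PSL}_2(8)$, some element of $\mathrm{cd}(S)$ is divisible by two primes in $\pi(S)$; and (ii) for every non-trivial central extension $\hat S \twoheadrightarrow S$ of a non-abelian simple group $S$, some faithful irreducible character of $\hat S$ has degree divisible by two primes in $\pi(S)$. The exclusions in (i) correspond precisely to the exceptional conclusion of the lemma, as the degree sets $\{1,3,4,5\}$ of $A_5$ and $\{1,7,8,9\}$ of $\mathrm{PSL}_2(8)$ confirm. Part~(i) is checked family by family: for Lie-type groups the Steinberg character multiplied by a suitable semisimple character works; for $A_n$ with $n \geq 6$ a short hook-length computation suffices; for sporadic groups one consults the Atlas. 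For (ii), $\mathrm{PSL}_2(8)$ has trivial Schur multiplier so the case does not arise, while $\mathrm{SL}_2(5) = 2.A_5$ carries a faithful character of degree $6 = 2\cdot 3$; other simple groups with non-trivial multiplier are treated by analogous inspection of their Schur covers.
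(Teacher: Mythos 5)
The paper offers no proof of this lemma; it is quoted verbatim from Tong--Viet \cite{[Ton]}. So there is no in-paper argument to compare you against, and I assess your sketch on its own terms and against the known proof. Your architecture is the right one and is essentially how the result is established: split on whether $\theta$ is $G$-invariant, pass to a character triple $(\hat G,Z,\lambda)$ in the invariant case so that the ratios $\chi(1)/\theta(1)$ become (projective) character degrees of $S$, and invoke classification-based facts. Your logical skeleton is also correct: since alternative (b) presupposes extendibility (hence invariance), both the non-invariant case and the invariant-but-non-extendible case must land in alternative (a) outright.

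The gaps are real, however, and they sit exactly where the mathematical content lives. First, in the non-invariant case the only branch Guralnick's theorem leaves open is $[G:T]=p^{a}$ for a single prime $p$; there you must still exhibit $\psi\in\mathrm{Irr}(T|\theta)$ with $\psi(1)/\theta(1)$ divisible by a prime of $\pi(S)$ other than $p$, and ``a further Clifford-theoretic analysis inside $T$'' is not an argument --- it requires running through Guralnick's list ($A_{n-1}<A_n$, parabolic subgroups of $\mathrm{PSL}_n(q)$, $A_5<\mathrm{PSL}_2(11)$, $M_{11}$, $M_{23}$, $\mathrm{PSU}_4(2)$) and the ordinary and projective degrees of those stabilizers. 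Second, your facts (i) and (ii) \emph{are} the theorem: (i) is the classification of non-abelian simple groups all of whose character degrees are prime powers (exactly $A_5$ and $\mathrm{PSL}_2(8)$), and (ii) is its analogue for faithful characters of proper quotients of Schur covers; both are known and citable, but your verification is only gestured at, and the phrase ``the Steinberg character multiplied by a suitable semisimple character'' does not do what you want --- a product of two irreducible characters need not be irreducible, so it does not produce a \emph{character degree} divisible by two primes; one needs a single irreducible character of such degree (e.g.\ a regular or non-semisimple character of degree $|S|_p\cdot d$ attached to a suitable class in the dual group). As written the proposal is a correct plan rather than a proof; it becomes a proof once (i), (ii) and the index-$p^{a}$ analysis are replaced by precise citations or computations.
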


Let $\Gamma$ be a graph with vertex set $V(\Gamma)$ and edge set
$E(\Gamma)$. The complement of $\Gamma$ and the induced subgraph of $\Gamma$ on $X\subseteq V(\Gamma)$
 are denoted by $\Gamma^c$ and  $\Gamma[X]$, respectively.  If  $v,w \in V(\Gamma)$ are vertices of a connected component of $\Gamma$, the distance between $v$ and $ w$ is denoted by $d_\Gamma (v,w)$.  We now state some relevant results on character graphs
needed in the next section.

\begin{lemma}\label{chpsl}\cite{[white]}
Let $G\cong \rm{PSL}_2(q)$, where $q\geqslant 4$ is a power of a prime $p$.\\
\textbf{a)}
 If $q$ is even, then $\Delta(G)$ has three connected components, $\{2\}$, $\pi(q-1)$ and $\pi(q+1)$, and each component is a complete graph.\\
\textbf{b)}
 If $q>5$ is odd, then $\Delta(G)$ has two connected components, $\{p\}$ and $\pi((q-1)(q+1))$.\\
i)
 The connected component $\pi((q-1)(q+1))$ is a complete graph if and only if $q-1$ or $q+1$ is a power of $2$.\\
ii)
  If neither of $q-1$ or $q+1$  is a power of $2$, then $\pi((q-1)(q+1))$ can be partitioned as $\{2\}\cup M \cup P$, where $M=\pi (q-1)-\{2\}$ and $P=\pi(q+1)-\{2\}$ are both non-empty sets. The subgraph of $\Delta(G)$ corresponding to each of the subsets $M$, $P$ is complete, all primes are adjacent to $2$, and no prime in $M$ is adjacent to any prime in $P$.
 \end{lemma}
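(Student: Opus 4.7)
The plan is to produce the duke partition directly from a diametral pair, and then establish the clique conditions by splitting into the solvable and non-solvable cases. First, I fix primes $p, q \in \rho(G)$ with $d_{\Delta(G)}(p, q) = 3$ and set $\rho_i := \{r \in \rho(G) : d_{\Delta(G)}(r, q) = 4 - i\}$ for $i = 1, 2, 3, 4$; since the diameter equals $3$, these four sets partition $\rho(G)$, with $p \in \rho_1$ and $q \in \rho_4$. Distance considerations immediately give that no prime of $\rho_1$ is adjacent to any prime of $\rho_3 \cup \rho_4$, that $q$ is not adjacent to any prime of $\rho_1 \cup \rho_2$, and that every prime in $\rho_2$ has a neighbor in $\rho_3$. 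A prime of $\rho_3$ with no neighbor in $\rho_2$ can be reassigned to $\rho_4$ without damaging any non-adjacency condition (its only possible neighbors lie in $\rho_3 \cup \rho_4$). The remaining substantive content is therefore the pair of clique conditions on $\rho_1 \cup \rho_2$ and $\rho_3 \cup \rho_4$, which is exactly the bipartiteness of $\Delta(G)^c$ with bipartition $(\rho_1 \cup \rho_2, \rho_3 \cup \rho_4)$.

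If $G$ is solvable, this is Lewis's theorem \cite{[A]}, so I assume $G$ is non-solvable and argue by contradiction: suppose there exist $r, s \in \rho_1 \cup \rho_2$ with $rs \nmid \chi(1)$ for every $\chi \in \rm{Irr}(G)$ (the case $r, s \in \rho_3 \cup \rho_4$ is symmetric). Let $R = R(G)$ and choose a minimal normal subgroup $N/R$ of $G/R$, so $N/R \cong S^k$ for some non-abelian simple group $S$. The goal is to use the character degrees arising from this chief factor to exhibit a $\chi \in \rm{Irr}(G)$ divisible by $rs$, contradicting the choice of $r,s$.

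The technical core is a case analysis on $S$. When $S \cong \rm{PSL}_2(q)$ with $q \geq 4$, Lemma \ref{chpsl} describes $\Delta(S)$ precisely: its non-trivial component is either complete or a join of cliques $\pi(q-1) \setminus \{2\}$ and $\pi(q+1) \setminus \{2\}$ through the vertex $2$. Lemma \ref{lw} supplies the irreducible character degrees $(q \pm 1)[G : G \cap \rm{PGL}_2(q)]$ in almost simple overgroups of $S$, and Lemma \ref{good} together with Lemma \ref{fraction} and Gallagher's theorem lift these into $\rm{Irr}(G)$; combining the resulting degrees forces any pair $r, s \in \pi(S)$ lying on the same side of the $\{2\}$-join to be joined in $\Delta(G)$, and a pair lying on opposite sides to both be joined to $2$, so that in either case $r, s$ fall into a common clique of the putative duke partition. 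For a general simple $S$, I expect an analogous argument using that $\pi(S)$ contains a cluster of primes pairwise joined in $\Delta(G)$ through induced characters of $N/R$, forcing the would-be non-edge $\{r, s\}$ into a clique.

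The main obstacle I foresee is the handling of simple sections $S \not\cong \rm{PSL}_2(q)$, since the excerpt only equips us with a detailed character-graph description in the $\rm{PSL}_2(q)$ family. The natural route is to reduce the general case to $\rm{PSL}_2(q)$ by showing that any other non-abelian simple section produces enough pairwise-adjacent primes in $\Delta(G)$ (via Lemma \ref{good} combined with Ito--Michler-type input on $\pi(S)$) to force $\rm{diam}(\Delta(G)) \leq 2$, contradicting the hypothesis. Making this reduction precise, and in particular controlling the interaction between $\pi(S)$ and $\pi(R)$ so that no non-edge can straddle the two sides of the bipartition, is where I expect the bulk of the delicate bookkeeping to lie.
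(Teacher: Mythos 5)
Your proposal does not address the statement you were asked to prove. The statement is Lemma \ref{chpsl}: a description of the character graph $\Delta(G)$ for $G\cong \mathrm{PSL}_2(q)$ itself --- its connected components $\{2\}$, $\pi(q-1)$, $\pi(q+1)$ (for $q$ even), respectively $\{p\}$ and $\pi(q^2-1)$ (for $q>5$ odd), and the internal structure of the component $\pi(q^2-1)$ in terms of whether $q\pm 1$ is a power of $2$. What you have written instead is an outline of a proof of the paper's Main Theorem (the duke partition and bipartiteness of $\Delta(G)^c$ when the diameter is $3$), and in the course of that outline you explicitly \emph{invoke} Lemma \ref{chpsl} as a known tool (``Lemma \ref{chpsl} describes $\Delta(S)$ precisely''). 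So the content of the lemma itself is nowhere established; you have assumed exactly what was to be shown.

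For the record, the lemma is a result of White cited from the literature, and its proof has nothing to do with diametral pairs, solvable radicals, or chief factors. It is read off from the character table of $\mathrm{PSL}_2(q)$: for $q$ even one has $\mathrm{cd}(G)=\{1,\,q-1,\,q,\,q+1\}$, and since $q$, $q-1$, $q+1$ are pairwise coprime the three stated components appear, each complete because each nontrivial degree is itself a single clique-inducing number. For $q>5$ odd one has $\mathrm{cd}(G)=\{1,\,q,\,q-1,\,q+1,\,(q+\epsilon)/2\}$ with $\epsilon=(-1)^{(q-1)/2}$; the vertex $p$ is isolated since $\gcd(q,q^2-1)=1$, the sets $\pi(q-1)$ and $\pi(q+1)$ are each cliques meeting in the common vertex $2$, and no degree is divisible simultaneously by an odd prime of $q-1$ and an odd prime of $q+1$, so the component $\pi(q^2-1)$ is complete precisely when one of $M=\pi(q-1)-\{2\}$, $P=\pi(q+1)-\{2\}$ is empty, i.e. when $q-1$ or $q+1$ is a power of $2$. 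Any correct attempt must go through this degree computation; no amount of graph-theoretic bookkeeping on $\rho(G)$ for a general $G$ can substitute for it.
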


\begin{lemma}\label{ote}\cite{ME}
Let $G$ be a finite group, $R(G)< M\leqslant G$, $S:=M/R(G)$ be isomorphic to $ \rm{PSL}_2(q)$, where for some prime $p$ and positive integer $f\geqslant 1$, $q=p^f$, $|\pi(S)|\geqslant 4$ and $S\leqslant G/R(G)\leqslant \rm{Aut}(S)$. Also let $\theta\in \rm{Irr}(R(G))$. If $\Delta(G)^c$ is not  bipartite,  then $\theta$ is $M$-invariant.
\end{lemma}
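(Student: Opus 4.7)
The plan is to prove the contrapositive: assume $\theta$ is not $M$-invariant and show that $\Delta(G)^{c}$ is bipartite, i.e.\ $\rho(G)$ partitions into two $\Delta(G)$-cliques.

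First I set up Clifford theory. Because $S = M/R(G)$ is the socle of $G/R(G) \leq \mathrm{Aut}(S)$, the subgroup $M$ is normal in $G$. Let $T := I_{M}(\theta)$; the non-invariance of $\theta$ gives $R(G) \leq T < M$, so $\bar T := T/R(G)$ is a proper subgroup of $S \cong \mathrm{PSL}_2(q)$. By the Clifford correspondence, each $\chi \in \mathrm{Irr}(M \mid \theta)$ has the form $\chi = \psi^{M}$ for some $\psi \in \mathrm{Irr}(T \mid \theta)$, giving $\chi(1) = [S:\bar T]\,\psi(1)$. Since $M \trianglelefteq G$, every $\tilde\chi \in \mathrm{Irr}(G \mid \chi)$ satisfies $\chi(1) \mid \tilde\chi(1)$. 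Hence every prime in $\pi([S:\bar T])$ divides a single common character degree of $G$, so $\pi([S:\bar T])$ is a clique of $\Delta(G)$.

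Next I apply Dickson's classification of the maximal subgroups of $\mathrm{PSL}_2(q)$ to locate $\bar T$: it is contained in a Borel subgroup (index $q+1$), a dihedral subgroup of order $q \pm 1$ up to the scalar $\gcd(2,q-1)$ (with index divisible by both $p$ and $q \mp 1$), one of the exceptional groups $A_{4}$, $S_{4}$, $A_{5}$, or a subfield subgroup $\mathrm{PSL}_2(q_{0})$. For each possibility I would compute $\pi([S:\bar T])$ and record how it meets the cliques $\{p\}$, $\pi(q-1)$, $\pi(q+1)$ of $\Delta(S)$ described in Lemma \ref{chpsl}. Lifting characters of $S$ to $M$ via the quotient $M \to S$ and inducing them to $G$ transports every edge of $\Delta(S)$ into $\Delta(G)$, and together with the Clifford clique from the previous step this should realise $\rho(G)$ as the union of two $\Delta(G)$-cliques in each Dickson case. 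The hypothesis $|\pi(S)| \geq 4$ plays two roles: it excludes the exceptional cases $S \in \{A_{5}, \mathrm{PSL}_2(8)\}$ of Lemma \ref{good}, so some $\chi \in \mathrm{Irr}(M \mid \theta)$ already has $\chi(1)/\theta(1)$ divisible by two distinct primes of $\pi(S)$; and it forces $\pi(q^{2}-1)$ to be large enough that the bipartition argument is non-degenerate. Lemma \ref{fraction} is invoked throughout to bound how primes of $\rho(R(G)) \setminus \pi(S)$ can enter degrees of $\mathrm{Irr}(G \mid \theta)$.

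The principal obstacle is the Dickson case analysis, especially the subfield case $\mathrm{PSL}_2(q_{0})$, where $[S:\bar T]$ may simultaneously involve primes from $\pi(q-1)$ and $\pi(q+1)$, and the small exceptional cases $A_{4}$, $S_{4}$, $A_{5}$, where $[S:\bar T]$ swallows almost all of $\pi(S)$. In these cases one must identify additional edges of $\Delta(G)$ coming from characters induced through intermediate subgroups and verify that the primes of $\rho(G)$ outside $\pi([S:\bar T])$ remain pairwise joined in $\Delta(G)$, so that both parts of the partition are genuine cliques.
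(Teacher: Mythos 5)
Your contrapositive strategy has a fundamental gap: the hypothesis you are working with is \emph{local} (a single character $\theta$ of $R(G)$ fails to be $M$-invariant), but the conclusion you must reach is \emph{global} (all of $\Delta(G)^c$ is bipartite). The Clifford argument correctly shows that $[S:\bar T]$ divides every degree in $\mathrm{cd}(G\mid\theta)$, so $\pi([S:\bar T])$ is a clique of $\Delta(G)$; and Dickson's classification does control which primes of $\pi(S)$ lie in that clique. But none of this says anything about the edges of $\Delta(G)$ among primes arising from characters of $G$ lying over the \emph{other} members of $\mathrm{Irr}(R(G))$, nor about adjacencies inside $\rho(R(G))$ itself, which can be essentially arbitrary. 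Exhibiting one clique does not begin to force the remaining vertices of $\rho(G)$ into a second clique, and no amount of case analysis on $\bar T$ can close this, because the obstruction is not combinatorial bookkeeping over Dickson's list but the absence of any hypothesis constraining the rest of the graph. (Note also that $\Delta(S)$ contributes almost no edges --- it is a disjoint union of at most three cliques by Lemma \ref{chpsl} --- so ``transporting edges of $\Delta(S)$'' cannot supply the missing adjacencies.)

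The workable argument runs in the direct direction and leans on Lemma \ref{cycle}, which is exactly the tool this paper imports for such situations. If $\Delta(G)^c$ is not bipartite, it contains an odd cycle; a shortest one is induced, so its vertex set $\pi$ ($|\pi|\geqslant 3$ odd) is the vertex set of a cycle in $\Delta(G)^c$, and Lemma \ref{cycle} gives $N:=O^{\pi'}(G)=S_1\times A$ with $A$ abelian and $S_1\cong \mathrm{SL}_2(u^{\alpha})$ or $\mathrm{PSL}_2(u^{\alpha})$. Then $NR(G)/R(G)\cong S_1/Z(S_1)$ is a nontrivial simple normal subgroup of $G/R(G)$, hence equals the socle $S$, so $M=NR(G)=S_1R(G)$. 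Since $S_1=N'$ is characteristic in $N$ and hence normal in $G$, one gets $[S_1,R(G)]\leqslant S_1\cap R(G)\leqslant Z(S_1)$, and perfectness of $S_1$ together with the three subgroups lemma yields $[S_1,R(G)]=1$. As $A\leqslant N\cap R(G)$, every element of $M$ is a product of an element of $S_1$ (which now acts trivially on $\mathrm{Irr}(R(G))$) and an element of $R(G)$, so \emph{every} $\theta\in\mathrm{Irr}(R(G))$ is $M$-invariant. That is the content of the lemma; your proposal, by contrast, cannot be completed along the lines sketched.
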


\begin{lemma}\label{direct product}\cite{ME}
Assume that $f\geqslant 2$ is an integer, $q=2^f$,  $S\cong \rm{PSL}_2(q)$ and $G$ is a finite group such that $G/R(G)=S$. If $\Delta(G)[\pi(S)]=\Delta(S)$, then $G\cong S\times R(G)$.
\end{lemma}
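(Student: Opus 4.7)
The plan is to establish three character-theoretic conditions on $R := R(G)$---every $\theta \in \mathrm{Irr}(R)$ is $G$-invariant, extends to $G$, and satisfies $\pi(\theta(1)) \cap \pi(S) = \emptyset$---and then to synthesize them into the direct product $G \cong S \times R$. By Lemma~\ref{chpsl}(a), $\Delta(S)$ consists of three nonempty complete components $\{2\}$, $\pi(q-1)$, $\pi(q+1)$, so the hypothesis $\Delta(G)[\pi(S)] = \Delta(S)$ is equivalent to the statement that no $\chi \in \mathrm{Irr}(G)$ has $\chi(1)$ divisible by primes from two distinct components of $\Delta(S)$. A triangle on representatives of the three components shows that $\Delta(G)^c$ is not bipartite.

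For $G$-invariance, when $f \geq 4$ (so $|\pi(S)| \geq 4$) Lemma~\ref{ote} with $M = G$ applies directly; the cases $f \in \{2,3\}$ require a direct Clifford argument, using that any nontrivial $[G:I_G(\theta)]$ must be divisible by a prime $p \in \pi(S)$ and that multiplying $\chi \in \mathrm{Irr}(G|\theta)$ by inflated $S$-characters of degrees $q$, $q\pm 1$ (provided by Lemma~\ref{lw}) produces forbidden cross-component edges. For extension, the Schur multiplier $M(\mathrm{PSL}_2(2^f))$ is trivial for $f \geq 3$; for $f = 2$ (where $S = A_5$ and $M(S) = \mathbb{Z}/2$), a non-extending $\theta$ would yield, via the cover $\mathrm{SL}_2(5)$, a character $\chi \in \mathrm{Irr}(G|\theta)$ with $\chi(1)/\theta(1) = 6$, so $\chi(1)$ would be divisible by both $2$ and $3$---distinct components of $\Delta(A_5)$---a contradiction. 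Once $\theta$ extends to $\hat\theta$, Gallagher's theorem gives $\mathrm{cd}(G|\theta) \supseteq \{\theta(1)q,\ \theta(1)(q-1),\ \theta(1)(q+1)\}$, so any $p \in \pi(\theta(1)) \cap \pi(S)$ would be joined in $\Delta(G)$ to primes from every other $\Delta(S)$-component via these degrees, which is forbidden. Hence $\pi(\theta(1)) \cap \pi(S) = \emptyset$ for every $\theta$, and by It\^o--Michler each Sylow $p$-subgroup of $R$ with $p \in \pi(R) \cap \pi(S)$ is abelian and characteristic.

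For the synthesis, let $A$ be the Hall $(\pi(R) \cap \pi(S))$-subgroup of $R$: it is abelian, characteristic in $R$, and hence $G$-normal. I would first show $A \leq Z(G)$ by tracking how $G$-invariance of $\theta \in \mathrm{Irr}(R)$ propagates to constituents $\lambda \in \mathrm{Irr}(A)$ via Clifford theory together with the already-established Gallagher structure above each $\theta$. In the quotient $G/A$ the solvable radical $R/A$ has order coprime to $|S|$: Schur--Zassenhaus produces a complement $\bar M \cong S$, and Glauberman's lemma applied to the coprime, character-fixing action of $\bar M$ on the solvable $R/A$ shows that $\bar M$ centralizes $R/A$, giving $G/A \cong (R/A) \times S$. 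Lifting back through the central extension $1 \to A \to G \to (R/A) \times S \to 1$, the $S$-component of the classifying cocycle lies in $H^2(S, A)$, which vanishes by $M(S) = 1$ for $f \geq 3$ (for $f = 2$, a nontrivial class would embed $\mathrm{SL}_2(5)$ as a subquotient of $G$, reintroducing the forbidden degree-$6$ edge used in the extension step), so the extension splits and $G \cong R \times S$. The main obstacle I expect is the $A \leq Z(G)$ step, where the $G/R$-action on $A$ is not coprime to $|A|$ (so Glauberman does not apply directly) and must instead be controlled by a delicate combination of the $G$-invariant extension structure above $R$ with the graph hypothesis.
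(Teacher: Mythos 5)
This lemma is quoted from \cite{ME}, so the present paper contains no proof to compare against; judged on its own, your outline has a sound overall shape (invariance and extendibility of every $\theta\in\mathrm{Irr}(R)$, then $\pi(\theta(1))\cap\pi(S)=\emptyset$ via Gallagher, then a splitting argument), but its pivotal intermediate claim is false. You cannot show $A\leqslant Z(G)$, because it fails even in the target configuration: take $S=\mathrm{PSL}_2(8)$ and $R=(\mathbb{Z}/3\mathbb{Z})^5\rtimes\mathbb{Z}/11\mathbb{Z}$, a Frobenius group (note $11$ has order $5$ modulo~$3$, giving a fixed-point-free irreducible action), so that $\mathrm{cd}(R)=\{1,11\}$; then $G=S\times R$ satisfies every hypothesis of the lemma, $A$ is the Sylow $3$-subgroup of $R$, and $A\not\leqslant Z(R)$, hence $A\not\leqslant Z(G)$. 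No ``delicate control'' will rescue this. What is true, and what your synthesis actually needs, is that $A$ is centralized by the preimage $M_0$ of the Schur--Zassenhaus complement: for $\lambda\in\mathrm{Irr}(A)$, the $G$-invariance of any $\theta\in\mathrm{Irr}(R\mid\lambda)$ forces the $G$-orbit of $\lambda$ to coincide with its $R$-orbit, so the $M_0$-orbit of $\lambda$ has size dividing both $|S|$ (since $A\leqslant I_{M_0}(\lambda)$) and $|R:I_R(\lambda)|$, which divides $|R:A|$ and is coprime to $|S|$; hence every $\lambda$ is $M_0$-invariant, and Brauer's permutation lemma on the abelian group $A$ gives $A\leqslant Z(M_0)$. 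Then $L:=M_0^{(\infty)}$ is a perfect central extension of $S$ with $L\cap R\leqslant Z(L)$, and the Schur multiplier (together with your degree-$6$ argument when $q=4$) yields $L\cong S$ and $G=L\times R$. This also repairs your closing cohomological step, which invokes the vanishing of $H^2(S,A)$ via $M(S)=1$ before the triviality of the action on $A$ --- the very point at issue --- has been secured.

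The second gap is the invariance step for $f\in\{2,3\}$, where $|\pi(S)|=3$ and Lemma~\ref{ote} does not apply. Your proposed mechanism --- multiplying $\chi\in\mathrm{Irr}(G\mid\theta)$ by inflated characters of $S$ --- is not valid: products of irreducible characters are rarely irreducible, and Gallagher is available only once $\chi_R=\theta$, which is what you are trying to establish. Worse, a non-invariant $\theta$ whose inertia subgroup corresponds to the Borel subgroup of $S$ contributes only the factor $q+1$ to the degrees in $\mathrm{cd}(G\mid\theta)$, and $\pi(q+1)$ is a single component of $\Delta(S)$, so no forbidden edge arises from the index alone. The clean repair is Lemma~\ref{good}: for $q\in\{4,8\}$ every component of $\Delta(S)$ is a singleton, so the alternative ``some $\chi(1)/\theta(1)$ is divisible by two distinct primes of $\pi(S)$'' already produces a cross-component edge contradicting $\Delta(G)[\pi(S)]=\Delta(S)$, and Tong-Viet's dichotomy then forces $\theta$ to extend outright. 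With these two repairs your architecture goes through.
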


When $\Delta(G)^c$ is not a bipartite graph, then there exists a useful restriction on the structure of $G$ as follows:

\begin{lemma}\label{cycle} \cite{AC}
Let $G$ be a finite group and $\pi$ be a subset of the vertex set of $\Delta(G)$ such that $|\pi|> 1$ is an odd number. Then $\pi$ is the set of vertices of a cycle in $\Delta(G)^c$ if and only if $O^{\pi^\prime}(G)=S\times A$, where $A$ is abelian, $S\cong \rm{SL}_2(u^\alpha)$ or $S\cong \rm{PSL}_2(u^\alpha)$ for a prime $u\in \pi$ and a positive integer $\alpha$, and the primes in $\pi - \{u\}$ are alternately odd divisors of $u^\alpha+1$ and  $u^\alpha-1$.
\end{lemma}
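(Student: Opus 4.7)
My plan is to prove the two implications separately. The $(\Leftarrow)$ direction is a clean transfer argument via Lemma~\ref{fraction} together with the explicit description of $\Delta(\mathrm{PSL}_2(q))$ in Lemma~\ref{chpsl}. The $(\Rightarrow)$ direction is the deep one: it first forces $G$ to be non-solvable, then requires a CFSG-based identification of the non-abelian composition factor governing $\pi$, and finally a structural analysis pinning down $O^{\pi'}(G)$.

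\emph{Direction} $(\Leftarrow)$. Set $N := O^{\pi'}(G)$ and assume $N = S \times A$ as in the statement. Because $G/N$ is a $\pi'$-group, Lemma~\ref{fraction} shows that for any $\chi \in \mathrm{Irr}(G)$ lying over $\varphi \in \mathrm{Irr}(N)$ the ratio $\chi(1)/\varphi(1)$ is coprime to every prime of $\pi$; hence $\Delta(G)[\pi] = \Delta(N)[\pi]$. Since $A$ is abelian, $\mathrm{cd}(N) = \mathrm{cd}(S)$, so this induced subgraph also equals $\Delta(S)[\pi]$. Applying Lemma~\ref{chpsl} to $S$: the singleton $\{u\}$ is a connected component of $\Delta(S)$, the odd divisors of $u^\alpha - 1$ form a clique, the odd divisors of $u^\alpha + 1$ form a clique, and no edges join these last two sets. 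Hence in $\Delta(G)^c[\pi]$ the prime $u$ is adjacent to every other vertex, and between $\pi \cap \pi(u^\alpha - 1)$ and $\pi \cap \pi(u^\alpha + 1)$ all cross-edges are present. Writing $\pi \setminus \{u\}$ in the alternating order $p_1, q_1, \ldots, p_k, q_k$ produces the cycle $u \, p_1 \, q_1 \, \cdots \, p_k \, q_k \, u$ in $\Delta(G)^c$, with vertex set exactly $\pi$.

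\emph{Direction} $(\Rightarrow)$. Assume $\pi$ is the vertex set of a cycle in $\Delta(G)^c$. Since $|\pi|$ is odd, $\Delta(G)^c$ contains an odd cycle and hence is not bipartite, so by \cite{1} the group $G$ is non-solvable. Let $M$ be the preimage in $G$ of a minimal non-abelian normal subgroup of $G/R(G)$; then $M/R(G) \cong T_1 \times \cdots \times T_m$ for isomorphic non-abelian simple groups. The key step is to show by a CFSG case analysis of character degrees that $m = 1$ and $T_1 \cong \mathrm{PSL}_2(u^\alpha)$ for some $u \in \pi$: for any other family of non-abelian simple groups, the corresponding $\Delta(T_1)$ either contains a clique on $\pi \cap \pi(T_1)$ too large to allow an odd cycle in the complement, or fails to supply a distinguished ``isolated'' prime $u$ that is adjacent in $\Delta(G)^c$ to every other prime of $\pi$. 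Once $T_1 \cong \mathrm{PSL}_2(u^\alpha)$ is established, Lemma~\ref{ote} forces every $\theta \in \mathrm{Irr}(R(G))$ to be $M$-invariant, Lemma~\ref{good} together with Gallagher's theorem makes $\theta$ extend to $M$, and Lemma~\ref{direct product} (in the even-characteristic case, with an analogous Schur-multiplier argument for odd $u$) yields $M = S \times A_0$ with $S \cong \mathrm{PSL}_2(u^\alpha)$ or $\mathrm{SL}_2(u^\alpha)$ perfect and $A_0 \leq R(G)$ abelian. A final analysis of $G/M$, which embeds into $\mathrm{Out}(\mathrm{PSL}_2(u^\alpha))$, identifies $O^{\pi'}(G)$ with $S \times A$ for an abelian $A$ absorbing $A_0$ and any further abelian pieces; the alternating description of $\pi \setminus \{u\}$ then follows by applying Lemma~\ref{chpsl} to $S$ and comparing with the given cycle.

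The principal obstacle is the CFSG classification step: systematically eliminating every non-abelian simple group other than $\mathrm{PSL}_2(u^\alpha)$ as the socle factor $T_1$. This requires detailed character-degree information for alternating, sporadic, and the remaining Lie-type families, and is the bulk of the technical work underlying \cite{AC}.
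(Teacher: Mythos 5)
First, note that the paper does not prove this lemma at all: it is quoted from \cite{AC} and used as a black box, so there is no internal argument to compare yours against. Judged on its own terms, your $(\Leftarrow)$ direction is essentially sound: the degree-ratio argument via Lemma \ref{fraction} correctly gives $\Delta(G)[\pi]=\Delta(N)[\pi]=\Delta(S)[\pi]$, and the alternating hypothesis then yields the odd cycle. The only omission is that Lemma \ref{chpsl} covers $\mathrm{PSL}_2(u^\alpha)$ but not $\mathrm{SL}_2(u^\alpha)$ for odd $u$; you would need to check that the additional faithful degrees of $\mathrm{SL}_2(u^\alpha)$, namely $(u^\alpha\pm1)/2$, create no edges joining $u$, the odd divisors of $u^\alpha-1$, and the odd divisors of $u^\alpha+1$.

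The $(\Rightarrow)$ direction, however, contains a genuine gap: the central claim --- that a single chief factor isomorphic to $\mathrm{PSL}_2(u^\alpha)$ with $u\in\pi$ is responsible for the cycle --- is asserted, not proved. You say this follows ``by a CFSG case analysis of character degrees,'' but you supply neither the reduction showing that one non-abelian chief factor above $R(G)$ must carry the whole odd cycle (why can the cycle not be assembled from degrees of $R(G)$, of several distinct simple components, and of outer automorphisms acting on them?) nor any of the case analysis itself; as you concede, that is the bulk of the content of the lemma. Two of the subsequent steps would also fail as written: Lemma \ref{ote} requires $|\pi(S)|\geqslant 4$, which need not hold (for instance $\pi(\mathrm{PSL}_2(8))$ has only three elements, and $|\pi|=3$ is the case actually used in this paper), and Lemma \ref{good} does not ``make $\theta$ extend to $M$'' in general --- its first alternative instead produces a character degree divisible by two primes of $\pi(M/R(G))$, and one must argue separately how that alternative interacts with the cycle. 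Finally, the passage from $M=S\times A_0$ to the exact identification of $O^{\pi'}(G)$ as $S\times A$ is only gestured at. As it stands, the forward implication is a plan rather than a proof.
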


\begin{lemma} \label{trick}\cite{AC}
Let $G$ be a finite group. Also let $\mathcal{K}$ be any (nonempty) set of normal subgroups of $G$ isomorphic to  $\rm{PSL}_2(u^\alpha)$ or $\rm{SL}_2(u^\alpha)$, where $u^\alpha\geqslant 4$ is a prime power (possibly with different values of $u^\alpha$). Define $K$ as the product of all the subgroups in $\mathcal{K}$ and $C:=C_G(K)$. Then every prime $t$ in $\rho(C)$ is adjacent  in $\Delta(G)$ to all the primes $q$ (different from $t$) in $|G/C|$, with the possible exception of $(t,q)=(2,u)$ when $|\mathcal{K}|=1$, $K\cong \rm{SL}_2(u^\alpha)$ for some $u\neq 2$ and $Z(K)=P^\prime$, $P\in \rm{Syl}_2(C)$. In any case, $\rho(G)=\rho(G/C)\cup \rho(C)$.
\end{lemma}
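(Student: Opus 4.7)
The plan is to exploit the central-product structure of $K$ together with the detailed character theory of $\rm{PSL}_2(u^\alpha)$ and $\rm{SL}_2(u^\alpha)$. First I would verify that distinct members $S_1, S_2 \in \mathcal{K}$ commute elementwise: each is normal in $G$ and equals its own derived subgroup (since $S_i/Z(S_i)$ is nonabelian simple), so $[S_1, S_2] \leqslant S_1 \cap S_2 \leqslant Z(S_1) \cap Z(S_2)$ is forced to be trivial. Hence $K$ is the central product of the subgroups in $\mathcal{K}$, $K \cap C = Z(K)$, and $KC$ is the central product $K \ast_{Z(K)} C$ with $KC/C \cong K/Z(K) \cong \prod_{S \in \mathcal{K}} S/Z(S)$, a direct product of nonabelian simple groups.

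For the adjacency claim, fix $t \in \rho(C)$, $\chi \in \rm{Irr}(C)$ with $t \mid \chi(1)$, and a prime $q \neq t$ dividing $|G/C|$. Let $\lambda_0 \in \rm{Irr}(Z(K))$ denote the central character of $\chi$. The irreducible characters of the central product $KC$ lying above $\chi$ correspond bijectively to characters $\theta \in \rm{Irr}(K)$ with central character $\lambda_0$, and the associated character of $KC$ has degree $\theta(1)\chi(1)$. If $q$ divides $|K/Z(K)|$, then $q$ divides one of $u^\alpha, u^\alpha - 1, u^\alpha + 1$ for some $S \in \mathcal{K}$, and I would use the classical character tables of $\rm{PSL}_2(u^\alpha)$ and $\rm{SL}_2(u^\alpha)$ --- the Steinberg character of degree $u^\alpha$, the principal series of degrees dividing $u^\alpha - 1$, the discrete series of degrees dividing $u^\alpha + 1$, and for $\rm{SL}_2$ the faithful characters of degrees $(u^\alpha \pm 1)/2$ --- to construct a $\theta$ of degree divisible by $q$ with central character $\lambda_0$. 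The resulting character of $KC$ has degree divisible by $tq$, and any $\xi \in \rm{Irr}(G)$ lying above it inherits $tq \mid \xi(1)$ by Clifford's theorem. If instead $q$ divides only $|G/KC|$, Clifford theory together with Lemma \ref{fraction} forces $q$ into the index part $[G : I_G(\theta \cdot \chi)]$ of the degree of a character of $G$ lying above a well-chosen extension $\theta \cdot \chi$, again yielding the adjacency.

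The main obstacle is the Schur-multiplier exception. When $|\mathcal{K}| = 1$, $K \cong \rm{SL}_2(u^\alpha)$ with $u \neq 2$, and $Z(K) = P'$ for $P \in \rm{Syl}_2(C)$, the containment $Z(K) \leqslant P'$ forces any $\chi \in \rm{Irr}(C)$ on which $Z(K)$ acts nontrivially to be nonlinear on $P$, and hence of even degree; for such $\chi$ the only $\theta \in \rm{Irr}(K)$ with matching central character are the faithful characters of $\rm{SL}_2(u^\alpha)$, all of even degree, and none has degree divisible by the odd prime $u$. This is precisely the allowed exceptional pair $(t,q) = (2,u)$, and a careful enumeration confirms this is the only genuine obstruction. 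The final equality $\rho(G) = \rho(G/C) \cup \rho(C)$ follows by a standard Clifford argument: any $p \in \rho(G)$ either divides a character degree of $C$ (hence lies in $\rho(C)$) or divides $\chi(1)/\theta(1)$ for a witness $\chi \in \rm{Irr}(G)$ with constituent $\theta$ of $\chi|_C$; by Lemma \ref{fraction} the latter forces $p \mid [G:C]$, and the character-theoretic information developed above, applied to the quotient $G/C$, yields $p \in \rho(G/C)$.
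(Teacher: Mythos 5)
The paper does not actually prove Lemma \ref{trick}; it is imported verbatim from \cite{AC}, so there is no internal proof to compare against. Judged on its own terms, your skeleton --- the central-product decomposition of $K$ via the three-subgroups argument, the correspondence between ${\rm Irr}(KC\mid\chi)$ and characters of $K$ with matching central character on $Z(K)=K\cap C$, multiplicativity of degrees, and the use of the ${\rm SL}_2(u^\alpha)$ character table to isolate the $(2,u)$ obstruction --- is the right one and is essentially the route taken in the cited source. However, two steps, as written, do not go through. The first is the case $q\mid |G/KC|$. You assert that ``Clifford theory together with Lemma \ref{fraction} forces $q$ into the index part $[G:I_G(\theta\cdot\chi)]$,'' but Lemma \ref{fraction} only says that $\xi(1)/\varphi(1)$ \emph{divides} $[G:N]$: it is an upper bound on which primes can enter a degree ratio and cannot force any prime to enter. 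To get $q\mid\xi(1)$ you must actually produce a $\theta$ (with the prescribed central character) whose $G$-orbit has length divisible by $q$, or a character of the relevant inertia quotient of degree divisible by $q$. For field-automorphism primes this is precisely the content of Lemma \ref{lw} (the degrees $(u^\alpha\pm1)[G:G\cap{\rm PGL}_2(u^\alpha)]$), which your sketch never invokes; without an input of this kind the step fails.

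The second gap is the exceptional case. What you prove is the implication ``$Z(K)\leqslant P'$ and $Z(K)\not\leqslant\ker\chi$ imply $2\mid\chi(1)$,'' which only explains why the exception \emph{can} occur. The lemma requires the converse direction: whenever the listed conditions fail --- in particular when $Z(K)\neq P'$ --- the edge between $2$ and $u$ must actually be present. The statement needed is that if $Z(K)\neq P'$ then some even-degree $\chi\in{\rm Irr}(C)$ has $Z(K)\leqslant\ker\chi$, so that it can be paired with the Steinberg character of degree $u^\alpha$. This is not a ``careful enumeration''; it follows from Ito--Michler applied to $C/Z(K)$: if every even-degree irreducible character of $C$ were faithful on $Z(K)$, then every irreducible character of $C/Z(K)$ would have odd degree, so $C/Z(K)$ would have a normal abelian Sylow $2$-subgroup $P/Z(K)$, whence $P'\leqslant Z(K)$, and $P'\neq 1$ (else Ito--Michler would force $2\notin\rho(C)$), giving $P'=Z(K)$. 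Supplying Lemma \ref{lw} in the first gap and this Ito--Michler argument in the second would make your outline a complete proof.
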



\section{Proof of Main Theorem}
$\noindent$In this section, we wish to prove our main result.

\begin{lemma}\label{diam}
Let $G$ be a finite group $p,q\in \rho(G)$ and $d_{\Delta(G)}(p,q)=3$. Then  for every $t\in \rho(G)-\{p,q\}$, $t$ is adjacent to $p$ or $q$ in $\Delta(G)$.
\end{lemma}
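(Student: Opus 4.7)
The plan is to argue by contradiction: suppose some $t \in \rho(G) \setminus \{p, q\}$ is adjacent to neither $p$ nor $q$ in $\Delta(G)$. Then $\{p, q, t\}$ is an independent set in $\Delta(G)$, equivalently a 3-cycle in $\Delta(G)^c$. I would apply Lemma \ref{cycle} to the odd set $\pi := \{p, q, t\}$ to obtain $O^{\pi'}(G) = S \times A$ with $A$ abelian and $S \cong \mathrm{PSL}_2(u^\alpha)$ or $\mathrm{SL}_2(u^\alpha)$ for some $u \in \pi$; moreover, the two primes in $\pi \setminus \{u\}$ are odd divisors of $u^\alpha + 1$ and $u^\alpha - 1$, respectively. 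Since $S$ is normal in $G$ and $\pi(S) \subseteq \pi(|G/C_G(S)|)$, the task reduces to exhibiting a common neighbor of $p$ and $q$ in $\Delta(G)$, which would force $d_{\Delta(G)}(p, q) \leq 2$ and contradict the hypothesis.

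The cleanest sub-case is $u = t$ with $t$ odd: both $t^\alpha \pm 1$ are even, and, WLOG, $p \mid t^\alpha + 1$ while $q \mid t^\alpha - 1$. The irreducible characters of $S$ of degrees $t^\alpha + 1$ and $t^\alpha - 1$ lift via Clifford's theorem to characters of $G$ whose degrees are divisible by $2p$ and $2q$ respectively; hence $2$ (which lies outside $\{p, q, t\}$ since $p, q$ are odd) is the desired common neighbor, giving the contradiction.

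In the remaining sub-cases ($u = t = 2$ and $u \in \{p, q\}$), I would invoke Lemma \ref{trick} with $\mathcal{K} := \{S\}$ and $C := C_G(S)$. Since every $r \in \rho(C)$ is adjacent in $\Delta(G)$ to each prime in $\pi(|G/C|) \setminus \{r\}$ except possibly $(r, u) = (2, u)$, and since $p, q, t$ all lie in $\pi(S) \subseteq \pi(|G/C|)$, the independence of $\{p, q, t\}$ forces $\rho(C) \subseteq \{2\}$. Writing $G/C \leq \mathrm{Aut}(S)$ as an extension of $S$ by an abelian subgroup $H$ of $\mathrm{Out}(S)$, Lemma \ref{lw} then supplies characters of $G$ of degrees $(u^\alpha \pm 1)\ell$ where $\ell := [G/C : (G/C) \cap \mathrm{PGL}_2(u^\alpha)]$. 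Any prime divisor $r$ of $\ell$ must be coprime to $\{p, q, t\}$, for otherwise $pq$ or $qt$ would divide one of these degrees, contradicting $d(p, q) \geq 2$ or the non-adjacency $q \not\sim t$; then such an $r$ either serves directly as a common neighbor of $p, q$ (in the case $u = t = 2$, where $r \in \pi(\ell)$ is adjacent to both $p$ and $q$ through $(u^\alpha + 1)\ell$ and $(u^\alpha - 1)\ell$) or combines with the Steinberg character analysis to give one (in the case $u \in \{p, q\}$). The boundary situation $\ell = 1$ with $H$ trivial forces $G \cong S \times C$ via Lemma \ref{direct product} (when $u = 2$) or directly from $Z(S) = 1$ in the PSL case, whence $\rho(C) \subseteq \{2\}$ combined with Lemma \ref{chpsl} gives $\Delta(G) = \Delta(S)$ with $p, q$ in distinct connected components, contradicting the finiteness of $d(p, q) = 3$. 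The principal obstacle is the sub-case $u \in \{p, q\}$ with odd characteristic, where $u$ is isolated in $\Delta(S)$: all neighbors of $p$ (or $q$) in $\Delta(G)$ must come from characters lying over the Steinberg character of $S$, and producing the common neighbor requires careful tracking of the extension factors using the bound $\rho(C) \subseteq \{2\}$ together with the Lemma \ref{trick} exception condition (which, when $2 \in \rho(C)$, forces $S \cong \mathrm{SL}_2(u^\alpha)$ and pins down the Sylow-$2$ structure of $C$).
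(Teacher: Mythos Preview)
Your opening moves (contradiction, Lemma~\ref{cycle}, case split on which element of $\pi$ equals $u$) match the paper, and your treatment of the sub-case $u=t$ with $t$ odd is correct and essentially what the paper does implicitly when it writes ``as $p,q\in\pi(u^{2\alpha}-1)$ and $d_{\Delta(G)}(p,q)=3$, $t=2$''. The remaining two sub-cases, however, contain genuine gaps.

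\textbf{Sub-case $u=t=2$, $\ell=1$.} You invoke Lemma~\ref{direct product}, but that lemma has the hypothesis $\Delta(G)[\pi(S)]=\Delta(S)$, which you have not verified. This verification is in fact the heart of the paper's Case~1: one must show that no character of $G$ can create an edge between primes in $\pi(S)$ that is absent in $\Delta(S)$. The paper does this by taking such a hypothetical $\chi$, letting $\theta$ be a constituent of $\chi_{R(G)}$, applying Lemma~\ref{ote} to get $\theta$ is $G$-invariant, and then using Gallagher (the Schur multiplier is trivial since $u=2$) to force $\theta(1)$ to be divisible by a prime in $\pi(u^{2\alpha}-1)$, which contradicts $d(p,q)=3$. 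Without this step Lemma~\ref{direct product} cannot be applied.

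\textbf{Sub-case $u\in\{p,q\}$.} Here your proposal is only a sketch, and the approach via Lemma~\ref{trick} alone does not close the argument: the exception clause in Lemma~\ref{trick} allows $2\in\rho(C)$ to be non-adjacent to $u$, so when $u=p$ you only get that $2$ is adjacent to $q$ and $t$, not to $p$, and no common neighbour of $p$ and $q$ emerges. The paper's key idea, which your proposal is missing, is this: first show every prime dividing $[G:M]$ is adjacent to $q$ (via Lemma~\ref{lw}); since $d(p,q)=3$, the first step $x$ of any $p$--$q$ path must satisfy $x\nmid[G:M]$. Then for $\chi$ with $px\mid\chi(1)$, pass to $\varphi\in\mathrm{Irr}(M)$ under $\chi$ (so $px\mid\varphi(1)$ by Lemma~\ref{fraction}), take $\theta\in\mathrm{Irr}(R(G))$ under $\varphi$, and use Lemmas~\ref{good} and~\ref{ote} to get $\theta$ is $M$-invariant. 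Since the Schur representation group of $S$ is $\mathrm{SL}_2(u^\alpha)$, the set $\mathrm{cd}(M\mid\theta)$ is $\{\theta(1)m:m\in\mathrm{cd}(\mathrm{SL}_2(u^\alpha)\mid\lambda)\}$; as no such $m$ is divisible by both $p=u$ and $x$, one of $p,x$ divides $\theta(1)$. Then $\theta(1)(u^\alpha+\epsilon)\in\mathrm{cd}(M\mid\theta)$ with $q\mid u^\alpha+\epsilon$ shows $q$ is adjacent to $x$ or to $p$, contradicting $d(p,q)=3$. This ``pick a neighbour $x$ of $p$ outside $\pi([G:M])$'' manoeuvre is the essential missing ingredient in your outline.
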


\begin{proof}
On the contrary, we assume that there exists $t\in \rho(G)-\{p,q\}$ such that the induced subgraph of $\Delta(G)^c$ on $\pi:=\{p,q,t\}$ is a triangle. Then by Lemma \ref{cycle}, $N:=O^{\pi^\prime}(G)=R\times A$, where $A$ is abelian, $R\cong \rm{SL}_2(u^\alpha)$ or $R\cong \rm{PSL}_2(u^\alpha)$ for a prime $u\in \pi$ and a positive integer $\alpha$, and the primes in $\pi - \{u\}$ are alternately odd divisors of $u^\alpha+1$ and  $u^\alpha-1$. Let $M:=NR(G)$. Then $S:=M/R(G)\cong N/R(N)\cong \rm{PSL}_2(u^\alpha)$ is a non-abelian minimal normal subgroup of $G/R(G)$. Note that $\pi\subseteq \pi(S)$. Let $C/R(G)=C_{G/R(G)}(M/R(G))$. We claim that $C=R(G)$. Suppose on the contrary that $C\neq R(G)$ and let $L/R(G)$ be a chief factor of $G$ with $L\leqslant C$. Then $L/R(G)\cong T^k$, for some non-abelian simple group $T$ and some integer $k\geqslant 1$.
 As $L\leqslant C$, $LM/R(G)\cong L/R(G)\times M/R(G) \cong S\times T^k$. Since $2\in \pi (S)\cap \pi(T) $, $2$ is adjacent to all vertices in $\pi$ which is impossible as $d_{\Delta(G)}(p,q)=3$. Therefore $G/R(G)$ is an almost simple group with socle $S=M/R(G)$. Now one of the following cases occurs:\\
 Case 1. $t=u$. Then as $p,q\in \pi(u^{2\alpha}-1)$ and $d_{\Delta(G)}(p,q)=3$, $t=2$.
  We claim that $G=M$. On the contrary, suppose $G\neq M$. Then using Lemma \ref{lw}, $[G:M](u^\alpha\pm 1)\in \rm{cd}(G/R(G))\subseteq \rm{cd}(G)$.
   It is a contradiction as $d_{\Delta(G)}(p,q)=3$.
    Hence $G=M$. Let $x,y\in \pi(S)$ be two distinct primes adjacent in $\Delta(G)$ and non-adjacent in $\Delta(S)$. Then $|\pi(S)|\geqslant 4$ and for some $\chi\in \rm{Irr}(G)$, $xy|\chi(1)$.
    Suppose $\theta \in \rm{Irr}(R(G))$ is a constituent of $\chi_{R(G)}$. Using Lemma \ref{ote}, $\theta$ is $G$-invariant.
     Hence as the Schur multiplier of $S$ is trivial, by Gallagher's Theorem, $\rm{cd}(G|\theta)=\{\theta(1)m\mid m\in \rm{cd}(S)\}$. Hence as $\chi(1)\in \rm{cd}(G|\theta)$, $\theta(1)$ is divisible by $x$ or $y$. It is a contradiction as $p,q\in \pi(u^{2\alpha}-1)$ and $d_{\Delta(G)}(p,q)=3$. Thus $\Delta(G)[\pi(S)]=\Delta(S)$. Hence by Lemma \ref{direct product}, $G\cong S\times R(G)$ and the diameter of $\Delta(G)$ is at most $ 2$. It is a contradiction as $d_{\Delta(G)}(p,q)=3$.\\
Case 2. $u\neq t$. Then without loss of generality, we can assume that $u=p$.
There exists $\epsilon\in \{\pm 1\}$ such that $q\in \pi(u^\alpha+\epsilon)$.
 Let $b$ be a prime divisor of $[G:M]$.
 We claim $b$ is adjacent to $q$.
 If $b\neq 2$, then by Lemma \ref{lw}, we are done.
 Hence we suppose $b=2$. If $b=p$, then using Lemma \ref{lw}, $p$ and $q$ are adjacent vertices in $\Delta(G)$ which is impossible. Therefore $p$ is odd and it is clear that $b$ is adjacent to $q$ in $\Delta(G)$. Thus as $d_{\Delta(G)}(p,q)=3$, there exists a prime $x\in \rho(G)$ such that $x\nmid [G:M]$ and in $\Delta(G)$, $x$ is adjacent to $p$. There exists $\chi \in \rm{Irr}(G)$ such that $px|\chi(1)$.
  Now let $\varphi \in \rm{Irr}(M)$ and $\theta\in \rm{Irr}(R(G))$ be constituents of $\chi_M$ and $\varphi_{R(G)}$, respectively. Then by Lemma \ref{fraction}, $px|\varphi(1)$. Since $\Delta(G)^c[\pi]$ is  a triangle,  using Lemmas \ref{good} and \ref{ote}, $\theta$ is $M$-invariant.
   Thus as $\rm{SL}_2(u^\alpha)$ is the Schur representation of $S$, for some $\lambda\in \rm{Irr}(Z(\rm{SL}_2(u^\alpha)))$,  $\rm{cd}(M|\theta)=\{\theta(1)m\mid m\in \rm{cd(SL}_2(u^\alpha)|\lambda)\}$.
    Hence as $\varphi(1)\in \rm{cd}(M|\theta)$, $\theta(1)$ is divisible by $x$ or $p$.
    Therefore $q$ is adjacent to $x$ or $p$ which is impossible as $d_{\Delta(G)}(p,q)=3$.
\end{proof}

\noindent Proof of Main Theorem.
Since the diameter of $\Delta(G)$ is equal to $3$, there exist $p,q\in \rho(G)$ such that $d_{\Delta(G)}(p,q)=3$. Now let
$$\rho_1:=\{x\in \rho(G)\mid d_{\Delta(G)}(x,p)=3\},$$
$$\rho_2:=\{x\in \rho(G)\mid d_{\Delta(G)}(x,p)=2\},$$
$$\rho_3:=\{x\in \rho(G)\mid d_{\Delta(G)}(x,q)=2\},$$
$$\rho_4:=\{x\in \rho(G)\mid d_{\Delta(G)}(x,q)=3\}.$$
Using Lemma \ref{diam}, it is easy to see that $\rho(G)$ can be partitioned as $\rho(G)=\rho_1\cup \rho_2\cup \rho_3\cup \rho_4$,  where no prime in $ \rho_{1} $ is
adjacent to any prime in $ \rho_{3} \cup \rho_{4} $ and no prime in $ \rho_{4} $ is adjacent to any prime in $ \rho_{1}  \cup \rho_{2 } $, every prime in $  \rho_{2 } $ is adjacent to some primes in $ \rho_{3} $ and vice-versa, every prime in $\rho_2$ is adjacent to all vertices in $\rho_1$, every prime in $\rho_3$ is adjacent to all vertices in $\rho_4$, and  the induced subgraphs of $ \Delta(G)$ on $ \rho_{1}$ and $ \rho_{4} $ are complete. Now we claim that $\Delta(G)[\rho_2]$  is a complete graph. On the contrary, we assume that $p_1,p_2\in \rho_2$ are non-adjacent vertices in $\Delta(G)$. Then the induced subgraph of $\Delta(G)^c$ on $\pi:=\{p_1,p_2,p\}$ is a triangle. Thus using Lemma \ref{cycle}, $N:=O^{\pi^\prime}(G)=T\times A$, where $A$ is abelian, $T\cong \rm{SL}_2(u^\alpha)$ or $T\cong \rm{PSL}_2(u^\alpha)$ for a prime $u\in \pi$ and a positive integer $\alpha$, and the primes in $\pi - \{u\}$ are alternately odd divisors of $u^\alpha+1$ and  $u^\alpha-1$. Let $C:=C_G(N)$ and $M:=NC$. If $2\in \rho(C)$ and when $p\neq 2$, $2$ and $p$ are adjacent vertices in $\Delta(G)$, or $\rho(C)$ contains an odd prime, then by Lemma \ref{trick}, $d_{\Delta(G)}(p,q)\leqslant 2$ which is impossible. Hence either $C$ is abelian, or $\rho(C)=\{2\}$, $p\neq 2$ and $2$ is not adjacent to $p$ in $\Delta(G)$. Thus $C=R(G)$ and $G/R(G)$ is an almost simple group with socle $S:=M/R(G)\cong \rm{PSL}_2(u^\alpha)$. Since $\rho_3\neq \emptyset$, there exists $t\in \rho_3$ such that $t$ and $p$ are adjacent vertices in $\Delta(G)$. If $p$ or $t$ divides $[G:M]$, then using Lemma \ref{lw} and this fact that $q\neq u$, we can see that $d_{\Delta(G)}(p,q)\leqslant 2$ which is a contradiction. Hence $t,p\notin \pi([G:M])$. There exists $\chi\in \rm{Irr}(G)$ so that $pt|\chi(1)$. Let $\varphi\in \rm{Irr}(M)$ and $\theta\in \rm{Irr}(R(G))$ be constituents of $\chi_M$ and $\varphi_{R(G)}$, respectively. By Lemma \ref{fraction}, $pt|\varphi(1)$.  Since $\Delta(G)^c[\pi]$ is  a triangle,  using Lemmas \ref{good} and \ref{ote}, $\theta$ is $M$-invariant. Thus as $\rm{SL}_2(u^\alpha)$ is the Schur representation of $S$ and $\varphi \in \rm{cd}(M|\theta)$, we deduce that $\theta$ is linear and for some $\epsilon\in \{\pm 1\}$, $t,p\in \pi(u^\alpha+\epsilon)$. If $q|[G:M]$, then using Lemma \ref{lw}, we deduce that $d_{\Delta(G)}(p,q)=1$ and it is a contradiction. Hence as $q\neq u$, $q\in \pi(u^\alpha-\epsilon)$.
 Therefore using  Lemma \ref{lw} and this fact that $d_{\Delta(G)}(p,q)=3$, we can see that $G=M$ and $u=2\in \rho_2$. Thus for some $\chi\in \rm{Irr}(G)$, $2q|\chi(1)$. Suppose $\theta\in \rm{Irr}(R(G))$ is a constituent of $\chi_{R(G)}$. Then as the Schur multiplier of $S$ is trivial, Lemma \ref{ote} implies that $\theta$ is extendible to $G$. Thus by Gallagher's Theorem, $\chi(1)\in \rm{cd}(G|\theta)=\{\theta(1)m\mid m\in \rm{cd}(S)\}$. Hence as $2q|\chi(1)$,  $\pi(\theta(1))=\{2\}$ and $2$ is adjacent to $p$ in $\Delta(G)$ which is impossible. Thus $\Delta(G)[\rho_2]$ is a complete graph. Similarly, we can see that $\Delta(G)[\rho_3]$ is too and the proof is completed.


\section*{Acknowledgements}
This research was supported in part
by a grant  from School of Mathematics, Institute for Research in Fundamental Sciences (IPM).



\begin{thebibliography}{22}
\bibitem{1}
Z. Akhlaghi, C. Casolo, S. Dolfi, K. Khedri and E. pacifici. On the character degree graph of solvable groups. \textit{Proc. Amer. Math. Soc.} \textbf{146} (2018), 1505-1513.
%
\bibitem{AC}
Z. Akhlaghi, C. Casolo, S. Dolfi, E. pacifici and L. Sanus. On the character degree graph of finite groups. \textit{Annali di Mat. Pura Appl.} (1923-) (2019), 1-20.
%
\bibitem{[D]}
Y. Berkovich. Finite groups with small sums of degrees of some non-linear irreducible characters.
\textit{J. Algebra.} \textbf{171} (1995), 426-443.
%
\bibitem{ME}
M. Ebrahimi. $k_4$-free character graphs with seven vertices. \textit{comm. algebra.} to apear.
%
\bibitem{Ha}
M. Ebrahimi, A. Iranmanesh and M.A. Hosseinzadeh. Hamiltonian charcter graphs. \textit{J. Algebra.} \textbf{428} (2015), 54-66.
%
\bibitem{con}
B. Huppert.  Some simple groups which are determined by the set of their character degrees I. Ill. \textit{J. Math}. \textbf{44} (2000), 828–842.
%
\bibitem{[isa]}
I. M. Isaacs. \textit{Character Theory of Finite Groups} (AMS Chelsea Publishing, Providence, RI, 2006. Corrected reprint of the 1976 original [Academic Press. New York; MR0460423]).
%
\bibitem{[A]}
M. L. Lewis. Solvable groups with character degree graphs having 5
vertices and diameter 3.\textit{ Comm. Algebra.} \textbf{30} (2002),
5485-5503.
%
\bibitem{[M]}
M. L. Lewis. An overview of graphs associated with character
degrees and conjugacy class sizes in finite groups.\textit{ Rocky
Mountain, J. Math.} \textbf{38 (1)} (2008), 175-211.
%
\bibitem{[DM]}
M. L. Lewis and D. L. White. Diameters of degree graphs of non-solvable groups II. \textit{J. Algebra.} \textbf{312(2)} (2007), 634-649.
%
\bibitem{[non]}
M. L. Lewis and D. L. White. Non-solvable groups with no prime dividing three character degrees. \textit{J. Algebra.} \textbf{336} (2011), 158-183.
%
\bibitem{[I]}
O. Manz, R. Staszewski and W. Willems. On the number of components
of a graph related to character degrees.\textit{ Proc. Amer.
Math. Soc.} \textbf{103 (1)} (1988), 31-37.
%
\bibitem{[P]}
O. Manz. W. Willems and T. R. Wolf. The diameter of the character
degree graph. \textit{J. Reine Angew. Math.} \textbf{402 }(1989), 181-198.
%
\bibitem{[C]}
O. Manz and T. R. Wolf.\textit{ Representations of Solvable Groups} (volume 185 of London Mthematical Society Lecture Note Series.
Cambridge University Press, Cambridge, 1993).
%
\bibitem{reg}
Z. Sayanjali. Z. Akhlaghi and B. Khosravi. On the regularity of character degree graphs. \textit{Bull. Aust. Math. Soc.} to appear.
%
\bibitem{[Ton]}
H. P. Tong-Viet. Groups whose prime graphs have no triangles. \textit{J. Algebra.} \textbf{378} (2013), 196-206.
%
\bibitem{[white]}
D. L. White. Degree graphs of simple linear and unitary groups. \textit{Comm. Algebra.} \textbf{34 (8)} (2006), 2907-2921.
%
\end{thebibliography}
\end{document}